\documentclass[12pt]{article}
\usepackage{amsmath,amssymb,amsbsy,amsfonts,amsthm,latexsym,amsopn,amstext,%
                                               amsxtra,euscript,amscd}
\begin{document}

\newtheorem{theorem}{Theorem}
\newtheorem{lemma}[theorem]{Lemma}
\newtheorem{claim}[theorem]{Claim}
\newtheorem{cor}[theorem]{Corollary}
\newtheorem{prop}[theorem]{Proposition}
\newtheorem{definition}{Definition}
\newtheorem{question}[theorem]{Open Question}
\newtheorem{conj}[theorem]{Conjecture}
\newtheorem{prob}{Problem}
\newtheorem{algorithm}[theorem]{Algorithm}

\def\squareforqed{\hbox{\rlap{$\sqcap$}$\sqcup$}}
\def\qed{\ifmmode\squareforqed\else{\unskip\nobreak\hfil
\penalty50\hskip1em\null\nobreak\hfil\squareforqed
\parfillskip=0pt\finalhyphendemerits=0\endgraf}\fi}

\def\cA{{\mathcal A}}
\def\cB{{\mathcal B}}
\def\cC{{\mathcal C}}
\def\cD{{\mathcal D}}
\def\cE{{\mathcal E}}
\def\cF{{\mathcal F}}
\def\cG{{\mathcal G}}
\def\cH{{\mathcal H}}
\def\cI{{\mathcal I}}
\def\cJ{{\mathcal J}}
\def\cK{{\mathcal K}}
\def\cL{{\mathcal L}}
\def\cM{{\mathcal M}}
\def\cN{{\mathcal N}}
\def\cO{{\mathcal O}}
\def\cP{{\mathcal P}}
\def\cQ{{\mathcal Q}}
\def\cR{{\mathcal R}}
\def\cS{{\mathcal S}}
\def\cT{{\mathcal T}}
\def\cU{{\mathcal U}}
\def\cV{{\mathcal V}}
\def\cW{{\mathcal W}}
\def\cX{{\mathcal X}}
\def\cY{{\mathcal Y}}
\def\cZ{{\mathcal Z}}

\def\fI{{\mathfrak I}}
\def\fJ{{\mathfrak J}}

\def\MNL{{\mathfrak M}(N;K,L)}
\def\VNL{V_m(N;K,L)}
\def\RNL{R(N;K,L)}

\def\MNm{{\mathfrak M}_m(N;K)}
\def\VNm{V_m(N;K)}

\def\Xm{\cX_m}

\def \C {{\mathbb C}}
\def \F {{\mathbb F}}
\def \L {{\mathbb L}}
\def \K {{\mathbb K}}
\def \Q {{\mathbb Q}}
\def \Z {{\mathbb Z}}

\def\barG{\overline{\cG}}
\def\\{\cr}
\def\({\left(}
\def\){\right)}
\def\fl#1{\left\lfloor#1\right\rfloor}
\def\rf#1{\left\lceil#1\right\rceil}

\newcommand{\pfrac}[2]{{\left(\frac{#1}{#2}\right)}}

\def\rem{\mathrm{\, rem~}}

\def \Prob{{\mathrm {}}}
\def\e{\mathbf{e}}
\def\ep{{\mathbf{\,e}}_p}
\def\epp{{\mathbf{\,e}}_{p^2}}
\def\em{{\mathbf{\,e}}_m}
\def\eps{\varepsilon}
\def\Res{\mathrm{Res}}
\def\vec#1{\mathbf{#1}}

\def \li {\mathrm {li}\,}

\def\mand{\qquad\mbox{and}\qquad}

\newcommand{\comm}[1]{\marginpar{%
\vskip-\baselineskip 
\raggedright\footnotesize
\itshape\hrule\smallskip#1\par\smallskip\hrule}}


\title{\Large\bf On Vanishing Fermat Quotients and a Bound of the Ihara Sum}

\author{{\sc Igor E. Shparlinski}\\
{Department of Computing, Macquarie University}\\
{ Sydney, NSW 2109,
Australia}\\
{\tt igor.shparlinski@mq.edu.au} } 
\date{\today}

\maketitle

\begin{abstract} We  improve an estimate of
A.~Granville (1987) on the
number of vanishing Fermat quotients $q_p(\ell)$ 
modulo a prime $p$ when $\ell$ runs through 
primes $\ell \le N$.  We use this bound  to obtain an 
unconditional improvement of the 
conditional (under the Generalised Riemann Hypothesis)
estimate of Y.~Ihara (2006) on a certain sum, related 
to vanishing Fermat quotients. In turn this sum  
appears in the study of the index of certain 
subfields of of 
cyclotomic fields $\Q(\exp(2 \pi i/p^2))$. 
\end{abstract}

\paragraph{Subject Classification (2000)} 11A07,  11N25, 11R04

\section{Introduction}

For a prime $p$ and an integer $u$ with $\gcd(u,p)=1$ 
we define the {\it Fermat quotient\/} $q_p(u)$
as the unique integer
with 
$$
q_p(u) \equiv \frac{u^{p-1} -1}{p} \pmod p, \qquad 0 \le q_p(u) \le p-1.
$$
We also define $q_p(u) = 0$ for $u \equiv 0 \pmod p$.

Fermat quotients appear and play a 
major role in various questions of
 computational and algebraic number theory
and thus have  been studied in a number of works, 
see, for 
example,~\cite{BFKS,ErnMet,Fouch,Gran1,Gran2,
Ihara,Len,OstShp}
and references therein. Amongst other properties,  
the $p$-divisibility of Fermat
quotients $q_p(a)$ by $p$ is important for many  applications
and in particular, the smallest value $\ell_p$ of $u\ge 1$ with
$q_p(u) \ne  0$, has been studied in a number of works,
see~\cite{BFKS,ErnMet,Fouch,Gran1,Len}.
For example, in~\cite{BFKS}, improving the previous 
estimate $\ell_p  = O\( (\log p)^2\)$  
of Lenstra~\cite{Len} (see also~\cite{Fouch,Gran2,Ihara}), the
following  bounds have been given:
$$
\ell_p \le \left\{\begin{array}{lll}
 (\log p)^{463/252 + o(1)}  &\quad \text{for all primes }\ p, \\
 (\log p)^{5/3 + o(1)}  &\quad \text{for almost all primes}\ p, 
\end{array}\right.
$$
(where ``almost all primes $p$'' means for all primes $p$ but a set 
of relative density zero).

Here we use some results of~\cite{BFKS}, combined with the
approach of Granville~\cite{Gran0} 
to obtain new estimates  on the cardinality of the sets
\begin{eqnarray*}
\cQ_p(N) & = &\{n \le N~:~q_p(n) = 0\}, \\
\cR_p(N) & = & \{\ell \le N~:~\ell~\text{prime},\ q_p(\ell) = 0\},
\end{eqnarray*}
which for small $N$ improve that of~\cite{Gran0}.
We  apply these improvements to study  the sums
$$
S_p = \sum_{n \in\cQ_p(p)} \frac{\Lambda(n)}{n}
$$ 
introduced by Ihara~\cite{Ihara}, 
where, as usual,
$$
\Lambda(n)=
\begin{cases}
  \log \ell,   & \quad\text{if}~n~\text{is a power of a prime}~\ell, \\
       0,   & \quad\text{otherwise},
\end{cases}
$$
be the {\it von Mangoldt function\/}.

We note that in~\cite[Corollary~7]{Ihara}, 
under the {\it Generalised Riemann Hypothesis\/}, the bound
\begin{equation}
\label{eq:Ihara}
S_p \le 2 \log \log p + 2 + o(1)
\end{equation}
as $p\to \infty$, has been obtained. Here we give
an unconditional proof of a stronger bound. 

Throughout the paper,  
the implied constants in the symbols `$O$',
and `$\ll$' may occasionally  depend on the  real positive  parameter
$\alpha$ and are absolute otherwise (we recall that 
the notation $U \ll V$ is 
equivalent to   $U = O(V)$).

\section{Preparations}

We recall that  for any integers $m$ and $n$ 
with $\gcd(mn,p) = 1$ we have
\begin{equation}
\label{eq:add struct}
q_p(mn) \equiv q_p(m) + q_p(n) \pmod p,
\end{equation} 
see, for example,~\cite[Equation~(2)]{ErnMet}.

Let $\cG_p$ be   the group of the
$p$th power residues in the unit group $\Z_{p^2}^*$  of  
the residue ring $\Z_{p^2}$ 
modulo $p^2$.

\begin{lemma}
\label{lem:Gp}
For any $u \in \Z_{p^2}^*$ the conditions
$q_p(u) = 0$ and $u \in \cG_p$ are equivalent. 
\end{lemma}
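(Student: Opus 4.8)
The plan is to reduce the statement to an elementary fact about the cyclic group $\Z_{p^2}^*$. First I would rewrite the vanishing condition in purely multiplicative form. Since $(u^{p-1}-1)/p$ is an integer (by Fermat's little theorem) and $q_p(u)$ is by definition its residue modulo $p$, one has $q_p(u) = 0$ if and only if $p \mid (u^{p-1}-1)/p$, that is, if and only if
$$
u^{p-1} \equiv 1 \pmod{p^2}.
$$
Thus the set of $u \in \Z_{p^2}^*$ with $q_p(u) = 0$ is exactly the subgroup $H = \{u \in \Z_{p^2}^* : u^{p-1} = 1\}$, where from now on everything is read inside $\Z_{p^2}^*$.

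Next I would compare $H$ and $\cG_p$ as subgroups of $\Z_{p^2}^*$, exploiting that this group is cyclic of order $\varphi(p^2) = p(p-1)$. The inclusion $\cG_p \subseteq H$ is immediate: if $u \equiv v^p$, then $u^{p-1} \equiv v^{p(p-1)} = v^{\varphi(p^2)} \equiv 1 \pmod{p^2}$ by Euler's theorem, so $q_p(u) = 0$. For the reverse inclusion I would simply match cardinalities. In a cyclic group of order $p(p-1)$ the equation $x^{p-1} = 1$ has exactly $\gcd(p-1,\,p(p-1)) = p-1$ solutions, so $|H| = p-1$; and $\cG_p$ is the image of the $p$th-power endomorphism, whose kernel (the elements of order dividing $p$) has order $\gcd(p,\,p(p-1)) = p$, whence $|\cG_p| = p(p-1)/p = p-1$. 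Since $\cG_p \subseteq H$ and $|\cG_p| = |H|$, the two subgroups coincide, which is precisely the asserted equivalence.

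The computation is routine once the group structure is in place, so the only point requiring genuine care is the appeal to the cyclicity of $\Z_{p^2}^*$; this holds for every prime $p$ (including $p=2$, where $\Z_4^*$ is cyclic of order $2$), and it is what simultaneously guarantees that $x^{p-1}=1$ has exactly $p-1$ roots and that $\cG_p$ is a subgroup of the same order. Alternatively, I could bypass the counting and observe directly that in a cyclic group of order $p(p-1)$ with $\gcd(p,p-1)=1$ there is a \emph{unique} subgroup of order $p-1$, namely the complement of the Sylow $p$-subgroup; both $\cG_p$ and $H$ are visibly this subgroup, giving the equality at once. I expect no substantive obstacle beyond correctly setting up this structural description.
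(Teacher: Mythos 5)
Your proof is correct and follows essentially the same route as the paper, which simply asserts both equivalences ($q_p(u)=0 \iff u^{p-1}\equiv 1 \pmod{p^2} \iff u\in\cG_p$) as clear. You supply the justification for the second equivalence that the paper leaves implicit, via the cyclicity of $\Z_{p^2}^*$ and the count $|\cG_p|=|H|=p-1$, and that argument is sound.
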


\begin{proof}
Clearly $q_p(u)=0$ for $u \in \Z_{p^2}^*$  
is equivalent to  $u^{p-1} \equiv 1 \pmod {p^2}$, which in 
turn is equivalent to $u \in \cG_p$.
\end{proof}

Let $T_p(K)$ be the number of $w \in [1, K]$
such that their residues modulo $p^2$ belong to
$\cG_p$. The following estimate
follows immediately from~\cite[Equation~(12)]{BFKS}.

\begin{lemma}
\label{lem:Sols}
For any fixed  
$$
\alpha >  \frac{463}{252},
$$
and 
$$
K \ge p^{\alpha}
$$
we have
$$
T_p(K) \ll K/p. 
$$ 
\end{lemma}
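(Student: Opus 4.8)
The plan is to prove Lemma~\ref{lem:Sols} by counting $w \in [1,K]$ whose reduction modulo $p^2$ lands in the subgroup $\cG_p$ of $p$th power residues. By Lemma~\ref{lem:Gp} this is exactly the count of $w \le K$ with $q_p(w) = 0$. The subgroup $\cG_p \subseteq \Z_{p^2}^*$ has index $p-1$ (since $\Z_{p^2}^*$ is cyclic of order $p(p-1)$ and $\cG_p$, the image of the $p$th power map, has order $(p-1)$), so among a full block of $p^2$ consecutive integers exactly $|\cG_p| = p-1$ reduce into $\cG_p$, giving a ``natural density'' of $(p-1)/p^2 \approx 1/p$. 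The target bound $T_p(K) \ll K/p$ is therefore the statement that the count is, up to a constant, what one expects from this density, provided $K \ge p^\alpha$ with $\alpha > 463/252$.

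First I would invoke the cited result, namely~\cite[Equation~(12)]{BFKS}, which is the analytic engine behind the whole estimate. The phrasing of the lemma (``follows immediately from~\cite[Equation~(12)]{BFKS}'') signals that the heavy lifting—a character-sum or large-sieve style bound controlling the distribution of $p$th power residues in short intervals—is already packaged there. So the real work is to match the normalisation of that equation to the quantity $T_p(K)$. I would write $T_p(K)$ as a sum over $w \le K$ of the indicator of $\cG_p$, expand that indicator using the additive characters or the multiplicative-character detection of the index-$(p-1)$ subgroup, and separate the main term (of size $\sim K/p$) from the error term. The threshold $\alpha > 463/252$ must be precisely the range in which~\cite[Equation~(12)]{BFKS} guarantees the error term is dominated by the main term; this is the same exponent appearing in the bound $\ell_p \le (\log p)^{463/252+o(1)}$ quoted in the introduction, so the two are linked through the same underlying estimate on the least $w$ with $w \in \cG_p$.

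The main obstacle I anticipate is bookkeeping rather than conceptual: one must confirm that the condition $K \ge p^\alpha$ is exactly what makes the error term in~\cite[Equation~(12)]{BFKS} smaller than $K/p$, and that the constant absorbed into $\ll$ is genuinely absolute (or depends only on $\alpha$, as the paper's convention permits). In particular I would check that for $K$ in the admissible range the count does not degenerate—i.e.\ that $K/p \to \infty$, so that ``$\ll K/p$'' is a meaningful upper bound and not swamped by a trivially small main term. Since the lemma only asserts an upper bound $T_p(K) \ll K/p$ (not an asymptotic), I need not pin down the main term exactly; it suffices to show both the main term and the error are $O(K/p)$, which is why the result can be read off directly once the normalisation of the cited equation is aligned with $T_p(K)$.
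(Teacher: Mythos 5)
Your proposal matches the paper exactly: the paper's entire ``proof'' of this lemma is the single sentence that it follows immediately from~\cite[Equation~(12)]{BFKS}, which is precisely the reduction you describe. One small correction to your bookkeeping: since $\Z_{p^2}^*$ is cyclic of order $p(p-1)$ and $\cG_p$ has order $p-1$, its index is $p$, not $p-1$ --- though the density $(p-1)/p^2 \approx 1/p$ you derive from it is right, so nothing downstream is affected.
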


Let $\tau_s(n)$ be the number of
representations of $n$ as a product of $s$ positive integers:
$$
\tau_s(n)=\#\big\{(n_1,\ldots,n_s)\in{\mathbb N}^s\,
|\,n=n_1n_2\ldots n_s\big\}.
$$

We also need the following upper bound from~\cite{Usol}:

\begin{lemma}
\label{es}
Uniformly over $n$ and $s$ we have
$$
\tau_s(n)\le
\exp\(\frac{(\log n)(\log s)}{\log\log n}
\(1+ O\(\frac{\log\log\log n+\log s}{\log\log n}\)\)\).
$$ 
\end{lemma}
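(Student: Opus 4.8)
The plan is to reduce the bound to the prime-power values of $\tau_s$ and then run a Rankin-type ``$n^{\delta}$'' argument with an optimised parameter. Since $\tau_s$ is multiplicative with $\tau_s(p^{a})=\binom{a+s-1}{s-1}$, writing $n=\prod_p p^{a_p}$ gives $\log\tau_s(n)=\sum_{p\mid n}\log\binom{a_p+s-1}{a_p}$. I would record two elementary bounds, both read off from the product expansion of the binomial coefficient: writing $\binom{a+s-1}{a}=\prod_{j=1}^{a}\bigl(1+\tfrac{s-1}{j}\bigr)$ and bounding each factor by $s$ gives the ``spread-out'' bound $\binom{a+s-1}{a}\le s^{a}$ (sharp at $a=1$), while writing $\binom{a+s-1}{a}=\prod_{i=1}^{s-1}\tfrac{a+i}{i}$ and bounding each factor by $a+1$ gives the ``concentrated'' bound $\binom{a+s-1}{a}\le(a+1)^{s-1}$. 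The first is efficient when the exponent is small (as in a squarefree $n$), the second when a single exponent is large.

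For every $\delta>0$, Rankin's trick gives
$$
\tau_s(n)=n^{\delta}\prod_{p\mid n}\frac{\binom{a_p+s-1}{a_p}}{p^{\delta a_p}},
\qquad\text{so}\qquad
\log\tau_s(n)=\delta\log n+\sum_{p\mid n}\left(\log\binom{a_p+s-1}{a_p}-\delta a_p\log p\right).
$$
The decisive step is to split the primes at the threshold $y=s^{1/\delta}$. For $p\ge y$ one has $\delta\log p\ge\log s$, so the spread-out bound yields $\log\binom{a_p+s-1}{a_p}\le a_p\log s\le\delta a_p\log p$ and the term is nonpositive; hence all primes $p\ge y$ may be discarded. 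The problem thus reduces to controlling $\sum_{p<y}\log\binom{a_p+s-1}{a_p}$ under the single constraint $\sum_p a_p\log p=\log n$.

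For the primes below $y$ the principal contribution comes from those with exponent $1$, each giving $\log s$ by the spread-out bound. Ignoring for the moment the higher exponents, the estimate reads $\log\tau_s(n)\lesssim\delta\log n+\pi(y)\log s=\log s\bigl(\tfrac{\log n}{\log y}+\pi(y)\bigr)$ with $y=s^{1/\delta}$, and I would minimise $\tfrac{\log n}{\log y}+\pi(y)$ over $y$. Since $\pi(y)\asymp y/\log y$ by Chebyshev's estimate, the optimum sits at $y\log y\asymp\log n$, i.e.\ $y$ of order $\log n/\log\log n$; the first term then dominates and equals $\tfrac{\log n}{\log y}$ with $\log y=\log\log n-\log\log\log n+O(1)$. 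This yields the main term $(\log n)(\log s)/\log\log n$, and the shift $\log y=\log\log n-\log\log\log n+\cdots$ is exactly what produces the $\log\log\log n$ in the error.

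The main obstacle is uniformity in $n$ and $s$ at once: one cannot fix $s$ and absorb its dependence into an $o(1)$. The genuinely delicate point is to bound, uniformly, the total contribution of the small primes carrying exponents $a_p\ge2$: here one must switch from the spread-out bound to the concentrated bound across the crossover $a\approx s$, and show that under the constraint $\sum_p a_p\log p=\log n$ this total never exceeds the stated error, so that no redistribution of the exponents beats the primorial configuration $a_p\equiv1$ by more than the allowed amount. This is also where the $\log s/\log\log n$ term originates, and it explains why the bound degrades (and the error swells) as $s$ grows. Assembling the contributions of the exponent-$1$ primes, the higher exponents, and the secondary terms from the prime-counting function into precisely $1+O\bigl((\log\log\log n+\log s)/\log\log n\bigr)$ is the technical heart of the argument.
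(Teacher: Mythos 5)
The paper offers no proof of this lemma to compare yours against: it is imported verbatim from Usol'tsev's paper \cite{Usol}, and the text cites it without argument. Judged on its own, your outline is the standard Rankin-type route to a Wigert-type divisor bound, and every individual assertion you make is correct: $\tau_s(p^a)=\binom{a+s-1}{s-1}$, the two bounds $\binom{a+s-1}{a}\le s^a$ and $\binom{a+s-1}{a}\le(a+1)^{s-1}$, the Rankin identity, the fact that primes $p\ge s^{1/\delta}$ contribute nonpositively, and the optimisation $y\log y\asymp\log n$, which correctly produces the main term and the $\log\log\log n$ in the error.

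The genuine gap is the one you flag yourself: nothing is actually proved about $\sum_{p<y}\max_{a\ge0}\bigl(\log\binom{a+s-1}{a}-\delta a\log p\bigr)$, and without that the argument only justifies the heuristic for the primorial-type $n$ with all exponents equal to $1$; it is a strategy, not a proof. Two concrete points are needed to close it. First, the per-prime maximum should be bounded directly rather than via a crossover between your two crude bounds: since $\log\binom{a+s-1}{a}-\delta a\log p=\sum_{j\le a}\bigl(\log(1+(s-1)/j)-\log p^{\delta}\bigr)$ and the $j$th summand is positive exactly for $j<(s-1)/(p^{\delta}-1)$, the maximum is attained at $a^*=\lfloor(s-1)/(p^{\delta}-1)\rfloor$ and, by $\binom{m}{k}\le(em/k)^k$, is $\ll(s-1)/(p^{\delta}-1)$ when positive. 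Second, and crucially, summing this over $p<s^{1/\delta}$ with the naive choice $\delta=\log s/\log\log n$ fails: already for $s=2$ the primes $p<2^{1/\delta}$ contribute a quantity of the same order as (indeed a constant multiple larger than) the main term, not of the order of the permitted error. One must inflate the parameter to $\delta=(1+\varepsilon)\log s/\log\log n$ with $\varepsilon\asymp(\log\log\log n+\log s)/\log\log n$, which shrinks the threshold $s^{1/\delta}$ enough for the secondary sum to fit inside the stated error term; this inflation is precisely where the error factor $1+O\bigl((\log\log\log n+\log s)/\log\log n\bigr)$ comes from, rather than from a separate accounting of higher exponents. Finally, uniformity for very large $s$ (say $\log s\gg(\log\log n)^2$) should be dispatched separately via the trivial bound $\tau_s(n)\le s^{\Omega(n)}\le\exp(\log s\log n/\log 2)$, which is already stronger than the claimed estimate in that range. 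With these additions your plan becomes a proof; as written, the quantitatively decisive estimate is missing.
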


In particular, we have:
\begin{cor}
\label{cor:taus}
If $s = (\log n)^{o(1)}$ then 
$$
\tau_s(n)\le n^{o(1)}. 
$$ 
as $n \to \infty$.
\end{cor}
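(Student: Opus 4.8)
The plan is to deduce the corollary directly from Lemma~\ref{es} by taking logarithms and tracking the asymptotics of each factor under the hypothesis $s = (\log n)^{o(1)}$. First I would unpack what this hypothesis means: writing $s = (\log n)^{o(1)}$ is equivalent to saying $\log s = o(\log\log n)$, that is,
$$
\frac{\log s}{\log\log n} \to 0 \qquad\text{as}\quad n \to \infty.
$$
This single relation is what drives everything.

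Next I would take the logarithm of the bound supplied by Lemma~\ref{es}, obtaining
$$
\log \tau_s(n) \le \frac{(\log n)(\log s)}{\log\log n}
\(1 + O\(\frac{\log\log\log n + \log s}{\log\log n}\)\).
$$
I would handle the correction factor first. Its error term splits into two pieces: $\frac{\log\log\log n}{\log\log n}$, which tends to $0$ as $n \to \infty$ for purely size reasons, and $\frac{\log s}{\log\log n}$, which tends to $0$ by the hypothesis. Hence the whole bracket is $1 + o(1)$ and in particular is bounded (say by $2$ for all sufficiently large $n$).

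It then remains to estimate the leading factor, which I would rewrite as
$$
\frac{(\log n)(\log s)}{\log\log n} = (\log n)\cdot\frac{\log s}{\log\log n} = (\log n)\cdot o(1) = o(\log n),
$$
again using the hypothesis. Combining this with the boundedness of the correction factor gives $\log \tau_s(n) \le o(\log n)\cdot O(1) = o(\log n)$, and exponentiating yields $\tau_s(n) \le \exp(o(\log n)) = n^{o(1)}$, as required. I do not anticipate any genuine obstacle here: the result is a routine asymptotic extraction from Lemma~\ref{es}. The only point demanding a little care is that the hypothesis $s = (\log n)^{o(1)}$ must be invoked twice—once to make the leading factor $o(\log n)$ and once (together with the trivial $\frac{\log\log\log n}{\log\log n}\to 0$) to keep the correction factor bounded.
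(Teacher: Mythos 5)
Your proposal is correct and is exactly the routine extraction the paper intends: the paper gives no written proof of Corollary~\ref{cor:taus}, presenting it as an immediate consequence of Lemma~\ref{es}, and your derivation (translating $s=(\log n)^{o(1)}$ into $\log s = o(\log\log n)$, which makes the main factor $o(\log n)$ and the bracket $1+o(1)$) is the standard way to fill in that step. No gaps.
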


\section{Distribution of vanishing Fermat quotients}

Here we estimate the cardinality of the sets 
$\cQ_p(N)$ and  $\cR_p(N)$. 
For large values of $N$, namely 
for $N \ge p^\alpha$ with $\alpha > 463/252$
such a bound is given by Lemma~\ref{lem:Sols}.
However here we are mostly interested in small values 
of $N$.

We note that Granville~\cite{Gran0} has given a 
bound on the cardinality of the set $\cR_p(N)$. 
Namely, it is shown in~\cite{Gran0} that for $u=1,2,\ldots$
\begin{equation}
\label{eq:Gran bound}
\# \cR_p(p^{1/u}) \le u p^{1/2 u}.
\end{equation}
We note that the  argument used in the proof
of~\eqref{eq:Gran bound} can be used to estimate 
$\# \cR_p(p^{1/u})$ for any $u\ge 1$. 

We derive now upper bounds on  $\# \cQ_p(N)$ 
and $\# \cR_p(N)$  that 
improve~\eqref{eq:Gran bound}.

\begin{theorem}
\label{thm:Qp} For any fixed  
$$
\alpha >  \frac{463}{252},
$$
for  $1 \le u  = (\log p)^{o(1)}$,
where  
$$
u = \frac{\log p}{\log N},
$$
we have
$$
\# \cQ_p(N) \ll u  N  p^{-(1+o(1))/\rf{\alpha u}}.
$$
as $p\to \infty$. 
\end{theorem}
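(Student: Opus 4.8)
The plan is to convert the multiplicative problem of counting $n$ with $q_p(n)=0$ into the additive one governed by~\eqref{eq:add struct}, and then to apply Lemma~\ref{lem:Sols}. Set $s=\rf{\alpha u}$; since $u=\log p/\log N$ means $N=p^{1/u}$, we have $N^s=p^{s/u}$ with $s/u\ge\alpha$, so $N^s\ge p^{\alpha}$ and Lemma~\ref{lem:Sols} applies with $K=N^s$. First I would discard the $n\le N$ with $p\mid n$: because $N=p^{1/u}\le p$ there is at most one such $n$, contributing $O(1)$. It therefore suffices to bound the number $Q$ of $n\le N$ with $\gcd(n,p)=1$ and $q_p(n)=0$, which by Lemma~\ref{lem:Gp} equals the number of $n\le N$ whose residue modulo $p^2$ lies in $\cG_p$.

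The heart of the argument is a product construction. For any ordered tuple $(n_1,\dots,n_s)$ of integers counted by $Q$, iterating~\eqref{eq:add struct} gives $q_p(n_1\cdots n_s)\equiv q_p(n_1)+\dots+q_p(n_s)\equiv0\pmod p$, so by Lemma~\ref{lem:Gp} the product $m=n_1\cdots n_s\le N^s$ has its residue modulo $p^2$ in $\cG_p$ and is thus one of the $T_p(N^s)$ integers estimated in Lemma~\ref{lem:Sols}. Since each fixed value $m$ arises from at most $\tau_s(m)$ ordered $s$-tuples, counting tuples in two ways gives
$$
Q^{s}\le\sum_{\substack{m\le N^s\\ m\bmod p^2\in\cG_p}}\tau_s(m)\le T_p(N^s)\max_{m\le N^s}\tau_s(m)\ll\frac{N^s}{p}\max_{m\le N^s}\tau_s(m).
$$
It remains to see that the divisor factor is $p^{o(1)}$: the relevant $m$ satisfy $\log m\le s\log N\ll\log p$, and since $s=\rf{\alpha u}=(\log p)^{o(1)}$ by hypothesis we have $s=(\log m)^{o(1)}$, so Corollary~\ref{cor:taus} (equivalently Lemma~\ref{es}) yields $\max_{m\le N^s}\tau_s(m)\le p^{o(1)}$. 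Substituting this and taking $s$-th roots gives $Q\ll N\,p^{-(1+o(1))/\rf{\alpha u}}$; adding back the $O(1)$ from multiples of $p$ and using $u\ge1$ gives the asserted bound (indeed a touch more, since the bare estimate carries no factor $u$).

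The step I expect to be the main obstacle is the uniform control of the multiplicity $\tau_s(m)$. The saving produced by Lemma~\ref{lem:Sols} is only a single factor $p^{-1}$ sitting inside an $s$-th root, so anything larger than $p^{o(1)}$ coming from the number of factorisations of a product value would destroy it. This is precisely why the hypothesis $u=(\log p)^{o(1)}$ is imposed: it forces $s=(\log p)^{o(1)}$, and Corollary~\ref{cor:taus} then keeps the divisor function subpolynomial in $p$. The remaining verifications — that $N^s\ge p^{\alpha}$ so Lemma~\ref{lem:Sols} is usable, and that $\log m\ll\log p$ so the corollary applies — are routine.
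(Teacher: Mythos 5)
Your proposal is correct and follows essentially the same route as the paper: set $s=\rf{\alpha u}$, map $s$-tuples from $\cQ_p(N)$ to products $m\le N^s$ lying in $\cG_p$ via~\eqref{eq:add struct} and Lemma~\ref{lem:Gp}, bound the number of such $m$ by Lemma~\ref{lem:Sols}, and control the multiplicity by Corollary~\ref{cor:taus}. Your extra care in splitting off the (at most one) multiple of $p$, which Lemma~\ref{lem:Gp} does not cover, is a small tidying of a point the paper passes over silently, not a different argument.
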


\begin{proof} We put 
$$
s = \rf{\alpha u}.
$$ 
We consider $\(\# \cQ_p(N)\)^s$
products $n = n_1\ldots n_s$
where $(n_1, \ldots, n_s) \in  \cQ_p(N)^s$. 
By~\eqref{eq:add struct} we see that 
$$
q_p(n) =  q(n_1)\ldots q_p(n_s) = 0.
$$
Besides, using Corollary~\ref{cor:taus} we see that each 
$n \le N^s <p^{\alpha + 1}$ has at most 
$$
\tau_s(n) = p^{o(1)}
$$ 
such representations.
We also note that $N^s \ge  p^{\alpha}$.
Therefore, combining Lemmas~\ref{lem:Gp} and~\ref{lem:Sols}, we derive
$$
\(\# \cQ_p(N)\)^s \le T_p(N^s)p^{o(1)} \le N^sp^{-1+o(1)},  $$
which implies  the desired result.
\end{proof}

\begin{cor}
\label{cor:Qp}
If  
$$ 
\frac{\log p}{\log N} = (\log p)^{o(1)} \mand \frac{\log p}{\log N} \to \infty$$ 
then 
$$
\# \cQ_p(N) \le  N^{211/463 + o(1)} 
$$
as $p\to \infty$. 
\end{cor}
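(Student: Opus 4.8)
The plan is to feed the conclusion of Theorem~\ref{thm:Qp} into the regime of the corollary and then optimise over the free parameter $\alpha$. First I would verify that the corollary's hypotheses supply those of the theorem. Writing $u = \log p/\log N$, the assumption $\log p/\log N = (\log p)^{o(1)}$ is precisely $u = (\log p)^{o(1)}$, and $\log p/\log N \to \infty$ says $u \to \infty$, so in particular $u \ge 1$ for all large $p$. Hence for any fixed $\alpha > 463/252$ the estimate
$$
\# \cQ_p(N) \ll u N p^{-(1+o(1))/\rf{\alpha u}}
$$
is available.

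The next step is to turn the factor $p^{-(1+o(1))/\rf{\alpha u}}$ into a power of $N$. Since $p = N^u$, we have $p^{-(1+o(1))/\rf{\alpha u}} = N^{-u(1+o(1))/\rf{\alpha u}}$. Because $u \to \infty$, the ceiling obeys $\alpha u \le \rf{\alpha u} < \alpha u + 1$, so $\rf{\alpha u} = \alpha u (1+o(1))$ and therefore $u/\rf{\alpha u} \to 1/\alpha$. The exponent thus becomes $-(1/\alpha)(1+o(1))$, giving $\# \cQ_p(N) \ll u\, N^{1 - 1/\alpha + o(1)}$. It then remains to absorb the leading factor $u$ into $N^{o(1)}$. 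Here I would use $u = (\log p)^{o(1)}$ together with $\log N = (\log p)/u$: since $u$ grows more slowly than any power of $\log p$, we get $\log N = (\log p)^{1 - o(1)} \to \infty$, while $\log u = o(\log\log p)$ is negligible against $\log N$, whence $u = N^{o(1)}$. This yields, for each fixed $\alpha > 463/252$,
$$
\# \cQ_p(N) \le N^{1 - 1/\alpha + o(1)}.
$$

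Finally I would optimise the exponent. The map $\alpha \mapsto 1 - 1/\alpha$ is increasing, and as $\alpha \downarrow 463/252$ its value tends to $1 - 252/463 = 211/463$. Given any $\eps > 0$, choosing $\alpha$ close enough to $463/252$ forces $1 - 1/\alpha < 211/463 + \eps$, and the displayed bound then gives $\# \cQ_p(N) \le N^{211/463 + \eps + o(1)}$ as $p \to \infty$; since $\eps$ is arbitrary this produces the claimed $N^{211/463 + o(1)}$.

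The one genuine subtlety, and the step I would watch most carefully, is the order of quantifiers in this last optimisation: the $o(1)$ in Theorem~\ref{thm:Qp} depends on the fixed $\alpha$, so $\alpha$ must be selected \emph{before} the limit in $p$ is taken, and the limit $\alpha \downarrow 463/252$ is handled externally through the $\eps$-argument rather than by letting $\alpha$ vary with $p$. Everything else—the ceiling approximation $\rf{\alpha u} \sim \alpha u$ and the absorption of $u$ into $N^{o(1)}$—is routine asymptotic bookkeeping made possible directly by the hypotheses $u \to \infty$ and $u = (\log p)^{o(1)}$.
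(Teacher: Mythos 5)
Your proposal is correct and is exactly the derivation the paper intends (the corollary is stated without proof as an immediate consequence of Theorem~\ref{thm:Qp}): substitute $p=N^u$, use $u\to\infty$ to get $\rf{\alpha u}\sim\alpha u$, absorb $u=(\log p)^{o(1)}=N^{o(1)}$, and let $\alpha\downarrow 463/252$ so that $1-1/\alpha\downarrow 211/463$. Your care with the quantifier order — fixing $\alpha$ before taking $p\to\infty$ and handling the limit in $\alpha$ by an external $\eps$-argument — is precisely the right way to justify the final $o(1)$.
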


For the set $\cR_p(N)$ we have a bound in a wider 
range of $u$.

\begin{theorem}
\label{thm:Rp}
For any fixed  
$$
\alpha >  \frac{463}{252},
$$
for  $u \ge 1$,
where  
$$
u = \frac{\log p}{\log N},
$$
we have
$$
\# \cR_p(N) \ll u  N  p^{-1/\rf{\alpha u}} 
$$
as $p\to \infty$. 
\end{theorem}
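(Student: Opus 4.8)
The plan is to follow essentially the same multiplicative trick as in the proof of Theorem~\ref{thm:Qp}, but now exploiting the fact that when $\ell_1, \ldots, \ell_s$ are primes the product $n = \ell_1 \cdots \ell_s$ has far fewer representations as an ordered $s$-tuple than a generic integer does. This is precisely what buys us the wider range of $u$: in Theorem~\ref{thm:Qp} the divisor-bound $\tau_s(n) = p^{o(1)}$ required $s = (\log p)^{o(1)}$, i.e.\ $u = (\log p)^{o(1)}$, but for products of $s$ primes the number of orderings is at most $s!$, which is a much milder constraint and lets us take $u$ as large as we like.

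First I would set $s = \rf{\alpha u}$ exactly as before, so that $N^s \ge p^\alpha$, and form the $\(\#\cR_p(N)\)^s$ products $n = \ell_1 \cdots \ell_s$ with each $\ell_i$ ranging over the primes counted by $\cR_p(N)$. By the additive structure~\eqref{eq:add struct} together with Lemma~\ref{lem:Gp}, each such $n$ satisfies $q_p(n) = 0$ and hence lies in $\cG_p$ modulo $p^2$; since $n \le N^s < p^{\alpha+1}$, the count of distinct such $n$ is bounded by $T_p(N^s) \ll N^s/p$ via Lemma~\ref{lem:Sols}. The key difference from Theorem~\ref{thm:Qp} is the multiplicity bound: a squarefree integer that is a product of exactly $s$ distinct primes arises from at most $s!$ ordered tuples, so instead of the factor $p^{o(1)}$ we pick up a factor of at most $s!$.

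Putting these together gives
$$
\(\#\cR_p(N)\)^s \le s! \, T_p(N^s) \ll s! \, N^s p^{-1},
$$
whence
$$
\#\cR_p(N) \ll (s!)^{1/s} N p^{-1/s}.
$$
By Stirling's formula $(s!)^{1/s} \ll s = \rf{\alpha u} \ll u$, and substituting $s = \rf{\alpha u}$ into the exponent of $p$ yields exactly $\#\cR_p(N) \ll u N p^{-1/\rf{\alpha u}}$, as claimed. The constant implied by $\ll$ absorbs the dependence on $\alpha$, which is permitted by the conventions fixed in the introduction.

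The main point to watch — and the only place the argument is genuinely more delicate than a verbatim copy of Theorem~\ref{thm:Qp} — is the multiplicity estimate. One must be slightly careful that the primes $\ell_i$ need not be distinct (nothing forces the tuple to avoid repeated entries), so strictly the multiplicity of a given product is governed by $\tau_s$ restricted to prime-power factorisations rather than by $s!$ outright; however, since each $\ell_i \le N$ and $n$ has at most $s$ prime factors counted with multiplicity, the number of ordered factorisations into $s$ primes is still at most $s!$ (it is the number of arrangements of the multiset of prime factors, which never exceeds $s!$). Verifying this bound cleanly, and confirming that $(s!)^{1/s}$ contributes only the harmless factor $u$ rather than something that degrades the exponent of $p$, is the crux; everything else transfers directly from the proof of Theorem~\ref{thm:Qp}.
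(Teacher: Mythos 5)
Your proposal is correct and is essentially identical to the paper's own argument: the paper likewise repeats the proof of Theorem~\ref{thm:Qp} with the multiplicity factor $\tau_s(n)=p^{o(1)}$ replaced by the bound $s!$ on the number of ordered $s$-tuples of primes with a given product, obtaining $\(\# \cR_p(N)\)^s \ll s!\, T_p(N^s) \ll s!\, N^s p^{-1}$. Your extra care about repeated primes and the explicit Stirling step $(s!)^{1/s}\ll s\ll u$ just spell out details the paper leaves to the reader.
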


\begin{proof} The proof is the same as that of 
Theorem~\ref{thm:Qp} except that instead of 
Corollary~\ref{cor:taus}  we note that there are at most 
$s!$ products of $s$ primes 
$\ell_1\ldots \ell_s$ that take the same value.
So, we derive
$$
\(\# \cR_p(N)\)^s \ll s! T_p(N^s) \ll s! N^s p^{-1}, 
$$
and  the result now follows.
\end{proof}

\begin{cor}
\label{cor:Rp}
If  $N < p$ and 
$$ 
\frac{\log p}{\log N} \to \infty$$ 
then 
$$
\# \cR_p(N) \le  N^{211/463+ o(1)} \log p 
$$
as $p\to \infty$. 
\end{cor}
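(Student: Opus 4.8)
The plan is to deduce Corollary~\ref{cor:Rp} directly from Theorem~\ref{thm:Rp} by a substitution followed by a limiting argument in the parameter $\alpha$. First I would rewrite the bound of Theorem~\ref{thm:Rp} purely in terms of $N$. Since $u=\log p/\log N$ we have $p=N^u$, and therefore
$$
p^{-1/\rf{\alpha u}} = N^{-u/\rf{\alpha u}}.
$$
The hypothesis $\log p/\log N\to\infty$ is exactly the statement that $u\to\infty$, while $N<p$ guarantees $u\ge 1$ so that Theorem~\ref{thm:Rp} is applicable. The elementary observation that drives everything is that, as $u\to\infty$, one has $\rf{\alpha u}=\alpha u\,(1+o(1))$, so that $u/\rf{\alpha u}\to 1/\alpha$ and hence
$$
N\,p^{-1/\rf{\alpha u}} = N^{1-1/\alpha+o(1)}.
$$
Feeding this into Theorem~\ref{thm:Rp} gives $\#\cR_p(N)\ll u\,N^{1-1/\alpha+o(1)}$ for every fixed $\alpha>463/252$.

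Next I would optimise the exponent over the admissible range of $\alpha$. The exponent $1-1/\alpha$ decreases to $1-252/463=211/463$ as $\alpha\to 463/252$, and since $\alpha$ may be chosen arbitrarily close to this threshold, the residual gap is absorbed into the $o(1)$ in the exponent of $N$. Concretely, for each $\eps>0$ one fixes $\alpha$ close enough to $463/252$ that $1-1/\alpha<211/463+\eps$, applies the theorem, and then lets $\eps\to 0$; this yields the target exponent $211/463+o(1)$. It then remains only to control the factor $u$. Because the primes counted by $\cR_p(N)$ satisfy $\ell\le N$ with $N\ge 2$, we have $\log N\ge\log 2$ and hence $u=\log p/\log N\ll\log p$. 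Thus the factor $u$ contributes at most the single $\log p$ appearing in the statement, and combining the estimates gives $\#\cR_p(N)\ll N^{211/463+o(1)}\log p$, as required.

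I do not expect any hard analysis here; the genuinely delicate point is purely the bookkeeping of two interacting $o(1)$ terms — the one arising from the fixed-but-near-threshold choice of $\alpha$, and the one arising from $\rf{\alpha u}/(\alpha u)\to 1$ as $u\to\infty$. The care needed is that $\alpha$ is \emph{fixed} in Theorem~\ref{thm:Rp}, whereas only the final exponent of $N$ is permitted to depend on $p$; the $\eps$-then-$p$ scheme sketched above is the standard device that keeps the implied constants uniform for each fixed $\alpha$ while still delivering a clean $o(1)$. The only structural difference from Corollary~\ref{cor:Qp} worth flagging is that here $u$ may be as large as a constant multiple of $\log p$ (since $N$ may be small), so that the factor $u$ cannot be absorbed into $N^{o(1)}$ as it was there; this is precisely why the present bound carries an explicit $\log p$ and why Theorem~\ref{thm:Rp} holds in the wider range $u\ge 1$ rather than under the restriction $u=(\log p)^{o(1)}$.
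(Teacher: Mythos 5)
Your derivation is correct and is exactly the (implicit) route the paper intends: the corollary is stated without proof as an immediate consequence of Theorem~\ref{thm:Rp}, obtained by writing $p^{-1/\rf{\alpha u}}=N^{-u/\rf{\alpha u}}$, using $u\to\infty$ to get $u/\rf{\alpha u}\to 1/\alpha$, bounding $u\ll\log p$, and letting $\alpha\to 463/252$. Your remarks on the two interacting $o(1)$ terms and on why the factor $u$ forces an explicit $\log p$ here (unlike in Corollary~\ref{cor:Qp}) are accurate bookkeeping, not a deviation from the paper.
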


\section{Ihara sums}

First we consider approximations of $S_p$ by partial sums
$$
S_p(N) = \sum_{n \in\cQ_p(N)} \frac{\Lambda(n)}{n}.
$$

\begin{theorem}
\label{thm:SpN}
For $N = p^{o(1)}$ we have
$$
S_p  = S_p(N)  + O(N^{-252/463 + o(1)} \log p)
$$
as $p\to \infty$. 
\end{theorem}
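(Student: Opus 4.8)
The plan is to estimate the tail $S_p - S_p(N) = \sum_{n\in\cQ_p(p),\,n>N}\Lambda(n)/n$ and show it is $O(N^{-252/463+o(1)}\log p)$. Since $\Lambda$ is supported on prime powers, I split this tail into the contribution of primes $\ell$ and that of proper prime powers $\ell^k$ with $k\ge 2$. The first observation is that, by the additive property~\eqref{eq:add struct}, $q_p(\ell^k)\equiv k\,q_p(\ell)\pmod p$; as any prime power $n\le p$ has exponent $k\le\log_2 p<p$, we have $\gcd(k,p)=1$ and hence $q_p(\ell^k)=0$ if and only if $q_p(\ell)=0$. Thus in both pieces the relevant $n$ are governed by primes $\ell$ with $q_p(\ell)=0$, which are counted by $\#\cR_p$, so I can feed in the bounds of Theorem~\ref{thm:Rp} (and Corollary~\ref{cor:Rp}).

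For the primes I use a dyadic decomposition of $(N,p]$ into blocks $(M,2M]$ with $M=2^jN$. On each block, bounding $\log\ell/\ell\le \log(2M)/M$ and applying Theorem~\ref{thm:Rp} with parameter $u_M=\log p/\log(2M)$ gives a contribution $\ll (\log(2M)/M)\,\#\cR_p(2M)\ll \log(2M)\,u_M\,p^{-1/\rf{\alpha u_M}}$. The key simplification is that $u_M\log(2M)=\log p$ identically, so each block contributes at most $\log p\cdot p^{-1/\rf{\alpha u_M}}$, with the $M$-dependence now confined to the exponent. I will then show these per-block quantities decay geometrically in $j$: from $p^{-1/\rf{\alpha u_M}}\le p^{-1/(\alpha u_M+1)}=\exp\!\big(-\log p\,\log(2M)/(\alpha\log p+\log(2M))\big)$, differentiating in $\log(2M)$ shows each successive block shrinks by a factor bounded away from $1$ depending only on $\alpha$. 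Hence the whole dyadic sum is, up to a constant, its first term at $M=N$. Since $N=p^{o(1)}$ forces $u_N=\log p/\log N\to\infty$ and $\log N=o(\log p)$, that first term equals $\log p\cdot p^{-1/\rf{\alpha u_N}}=\log p\cdot N^{-1/\alpha+o(1)}$; letting the fixed parameter $\alpha$ tend to $463/252$ from above (the implied constants depend only on $\alpha$) converts $N^{-1/\alpha}$ into $N^{-252/463+o(1)}$.

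For the proper prime powers the same dyadic scheme applies, but now the weight $\log\ell/\ell^k$ supplies an extra factor $M^{-(k-1)}$ that anchors the dominant block at $M\approx N^{1/k}$ and guarantees convergence. For the leading case $k=2$ one finds a contribution $\ll \log p\cdot N^{-1/2-1/(2\alpha)+o(1)}$, and summing the rapidly shrinking contributions over $k\ge 2$ keeps the total at $\ll \log p\cdot N^{-1/2-1/(2\alpha)+o(1)}$. As $1/2+1/(2\alpha)>252/463$, this is negligible against the main term, so the proper prime powers do not affect the claimed bound.

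The main obstacle is the analysis of the ceiling $\rf{\alpha u_M}$ in the exponent: I must simultaneously (i) confirm that the first block already produces the correct leading exponent $N^{-1/\alpha+o(1)}$, which relies on $\log N=o(\log p)$ to replace $\rf{\alpha u_N}$ by $\alpha u_N(1+o(1))$, and (ii) verify the geometric decay across blocks so that the dyadic sum collapses to its first term rather than accumulating an extra factor of $\log p$ (which would degrade the bound to $(\log p)^2$). Handling the ceiling uniformly across the full range $N<M\le p$, where $u_M$ ranges from $\to\infty$ down to near $1$, is the one point demanding care; away from the dominant block the terms are a fixed negative power of $p$ and are absorbed harmlessly.
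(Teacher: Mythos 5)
Your proposal is correct and rests on the same engine as the paper's proof: a dyadic decomposition of the tail over primes with vanishing Fermat quotient, fed by the counting bound of Theorem~\ref{thm:Rp}. The execution differs in two respects. First, where you run a single fixed $\alpha$ over all dyadic blocks and justify collapsing the sum to its first block by showing that $x\mapsto \log p\cdot x/(\alpha\log p+x)$ has derivative bounded below, the paper instead splits the range at $L=N^3$, using Corollary~\ref{cor:Rp} (the $L^{211/463+o(1)}\log p$ bound) for $N<L\le N^3$ and Theorem~\ref{thm:Rp} with the specific choice $\alpha=463/251$ together with the crude inequality $\rf{\alpha u}\le \tfrac32\alpha u$ for $N^3<L\le p$; the two routes yield the same exponent, yours via a limiting argument in $\alpha$, the paper's by accepting a weaker exponent $L^{-1/3+o(1)}$ on the far range where it costs nothing. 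Second, and more substantively, your treatment of the proper prime powers is more careful than the paper's: the paper absorbs them into an error term $O(N^{-1}\log N)$ without comment, whereas the trivial bound on $\sum_{\ell^2>N}\log\ell/\ell^2$ is only $O(N^{-1/2}\log N)$, which is \emph{larger} than the claimed error $N^{-252/463+o(1)}\log p$ since $1/2<252/463$. Your observation that $q_p(\ell^k)\equiv k\,q_p(\ell)\pmod p$ with $\gcd(k,p)=1$ lets you impose the vanishing condition on $\ell$ itself and win the extra factor needed to reach $N^{-1/2-1/(2\alpha)+o(1)}\log p$, which is genuinely below the main term; this closes a small gap in the published argument rather than merely reproducing it.
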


\begin{proof} 
Clearly, we have
\begin{equation}
\label{eq:tail}
S_p - S_p(N)  =  \sum_{\substack{\ell  > N\\\ell\in \cR_p(p)}}  \frac{\log \ell}{\ell} + O(N^{-1} \log N).
\end{equation}

We now see from Corollary~\ref{thm:Qp} that for any 
$$
L < N^{3}
$$
we have
\begin{equation}
\label{eq:small L}
\begin{split}
\sum_{\substack{2L \ge \ell  > L\\ \ell\in \cR_p(p)}} \frac{\log \ell}{\ell} & \le \frac{\log L}{L} \sum_{\ell\in \cR_p(2L)} 1\\
 & \le \frac{\log L}{L}  L^{211/463 + o(1)} \log p
= L^{-252/463 + o(1)} \log p.
\end{split}
\end{equation} 

For  
$$
p \ge L > N^{3}
$$
we choose 
$$
\alpha = \frac{463}{251}
$$
and note that for $u \ge 1$
we have 
$$
\rf{\alpha u} \le  \frac{3}{2}  \alpha u.
$$
Thus  Theorem~\ref{thm:Rp} implies the bound 
$$
\# \cR_p(L) \ll   L^{1-2/3\alpha} \log p \ll 
 L^{2/3} \log p.
$$
Hence in the above range, we have
\begin{equation}
\label{eq:large L}
\begin{split}
\sum_{\substack{2L \ge \ell  > L\\ \ell\in \cR_p(p)}} \frac{\log \ell}{\ell} & \le \frac{\log L}{L} \sum_{\ell\in \cR_p(2L)} 1\\
 & \le \frac{\log L}{L}  L^{2/3} \log p
= L^{-1/3 + o(1)} \log p.
\end{split}
\end{equation} 
Thus covering the range $[N, p]$ by dyadic intervals
of the form $[L,2L]$ and using the bounds~\eqref{eq:small L},
and~\eqref{eq:large L} we  derive
$$
\sum_{\substack{\ell  >N\\\ell\in \cR_p(p)}} \frac{\log \ell}{\ell} \le N^{-252/463 + o(1)} \log p, 
$$
which after the substitution in~\eqref{eq:tail} implies
the desired estimate.
\end{proof}

Since by the Mertens formula (see, 
for example,~\cite[Equation~(2.14)]{IwKow})
$$
S_p(N) \le \sum_{n \le  N} \frac{\Lambda(n)}{n} = \log N + O(1),
$$
we derive from Theorem~\ref{thm:SpN}:

\begin{cor}
\label{cor:Sp1}
For $N = p^{o(1)}$ we have
$$
S_p  \le  \log N  + O(N^{-252/463 + o(1)} \log p + 1)
$$
as $p\to \infty$.
\end{cor}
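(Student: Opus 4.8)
The plan is to obtain the bound as an immediate consequence of Theorem~\ref{thm:SpN} together with a classical upper bound on the truncated sum $S_p(N)$. Since the truncation already does all the analytic work, the remaining argument is purely a matter of assembling two estimates and checking that their error terms are compatible under the hypothesis $N = p^{o(1)}$.

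First I would bound $S_p(N)$ from above. Because every term $\Lambda(n)/n$ is nonnegative and the index set $\cQ_p(N)$ is contained in $\{n \le N\}$, we immediately have
$$
S_p(N) = \sum_{n \in \cQ_p(N)} \frac{\Lambda(n)}{n} \le \sum_{n \le N} \frac{\Lambda(n)}{n}.
$$
The right-hand side is controlled by the Mertens formula, which gives $\sum_{n\le N}\Lambda(n)/n = \log N + O(1)$; hence $S_p(N) \le \log N + O(1)$.

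Next I would invoke Theorem~\ref{thm:SpN}, valid precisely in the range $N = p^{o(1)}$, which asserts that $S_p = S_p(N) + O(N^{-252/463 + o(1)} \log p)$. Substituting the upper bound for $S_p(N)$ into this identity and merging the two error terms yields
$$
S_p \le \log N + O(N^{-252/463 + o(1)} \log p + 1),
$$
as required.

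The proof involves no genuine obstacle: all the difficulty is concentrated in Theorem~\ref{thm:SpN}, where the tail $\sum_{\ell > N,\ \ell \in \cR_p(p)} (\log \ell)/\ell$ is controlled via the sieve-type bounds of Theorems~\ref{thm:Qp} and~\ref{thm:Rp}. The only points to verify here are that passing from the full sum to the subsum $S_p(N)$ is an inequality in the correct direction (it is, since dropping nonnegative terms can only decrease the sum, so $S_p(N)$ bounds $S_p$ from above in the desired way), and that the $O(1)$ coming from Mertens is absorbed into the stated error term (it is, since $N^{-252/463+o(1)}\log p + 1 \gg 1$).
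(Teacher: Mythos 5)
Your proof is correct and essentially identical to the paper's: the paper likewise bounds $S_p(N) \le \sum_{n\le N}\Lambda(n)/n = \log N + O(1)$ via the Mertens formula and substitutes this into Theorem~\ref{thm:SpN}. (One small remark: your closing parenthetical that ``$S_p(N)$ bounds $S_p$ from above'' is misstated---as a subsum of nonnegative terms one has $S_p(N)\le S_p$, not the reverse---but this does not affect anything, since your actual derivation correctly uses the two-sided asymptotic identity $S_p = S_p(N) + O(N^{-252/463+o(1)}\log p)$ from Theorem~\ref{thm:SpN} rather than any such inequality.)
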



We now obtain an unconditional improvement of the conditional
estimate~\eqref{eq:Ihara}.

\begin{cor}
\label{cor:Sp2}
We have
$$
S_p \le  \(463/252 + o(1)\) \log \log p 
$$
as $p\to \infty$. 
\end{cor}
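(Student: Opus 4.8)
The plan is to optimise the free parameter $N$ in Corollary~\ref{cor:Sp1}, which gives, for any $N = p^{o(1)}$,
$$
S_p \le \log N + O\(N^{-252/463 + o(1)} \log p + 1\).
$$
Here $\log N$ is increasing in $N$ while the error term $N^{-252/463+o(1)}\log p$ is decreasing, so the bound is governed by the balance between the two; comparing exponents shows that the natural scale is $N$ a power of $\log p$ slightly exceeding $(\log p)^{463/252}$. The reason this exponent is forced is that $252/463$ is exactly the reciprocal of the target constant $463/252$.

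First I would fix an arbitrary real $\eps > 0$ and take
$$
N = (\log p)^{463/252 + \eps}.
$$
Since $\log N = (463/252 + \eps)\log\log p = o(\log p)$, this $N$ satisfies $N = p^{o(1)}$, so Corollary~\ref{cor:Sp1} applies and contributes the main term $\log N = (463/252 + \eps)\log\log p$. Next I would check that with this choice the error term vanishes. Because $463/252 + \eps$ is a fixed constant, the exponent of $\log p$ in $N^{-252/463+o(1)}\log p$ equals
$$
\(\frac{463}{252} + \eps\)\(-\frac{252}{463} + o(1)\) + 1 = -\frac{252}{463}\,\eps + o(1),
$$
which tends to $-\tfrac{252}{463}\eps < 0$ as $p \to \infty$; hence $N^{-252/463+o(1)}\log p = (\log p)^{-(252/463)\eps + o(1)} \to 0$. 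Substituting back yields
$$
S_p \le \(\frac{463}{252} + \eps\)\log\log p + O(1),
$$
and since $O(1)/\log\log p \to 0$ this gives $\limsup_{p\to\infty} S_p/\log\log p \le 463/252 + \eps$.

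Finally, since $\eps > 0$ was arbitrary, letting $\eps \to 0$ produces $\limsup_{p\to\infty} S_p/\log\log p \le 463/252$, which is precisely the claimed bound $S_p \le (463/252 + o(1))\log\log p$. I do not anticipate a serious obstacle: the argument is a one-parameter optimisation of Corollary~\ref{cor:Sp1}. The only point requiring care is the internal $o(1)$ in the exponent, which is why I keep $\eps$ fixed while sending $p \to \infty$ and only afterwards let $\eps \to 0$ in the $\limsup$, rather than attempting the endpoint choice $N = (\log p)^{463/252}$ outright, for which the error term would only be $(\log p)^{o(1)}$ and need not be negligible against $\log\log p$.
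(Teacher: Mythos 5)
Your proposal is correct and is essentially the paper's own proof: the paper likewise takes $N = \rf{(\log p)^{\alpha}}$ with $\alpha > 463/252$ in Corollary~\ref{cor:Sp1}, obtains $S_p \le \alpha \log\log p + O(1)$, and concludes by letting $\alpha$ tend to $463/252$. Your additional verification that the error term $N^{-252/463+o(1)}\log p$ actually tends to zero for fixed $\eps>0$ is a welcome (if routine) expansion of a step the paper leaves implicit.
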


\begin{proof}
Taking $N = \rf{ (\log p)^{\alpha}}$ with $\alpha >463/252$
in the bound of Corollary~\ref{cor:Sp1} leads to the estimate
$$
S_p \le \alpha \log \log p +  O(1).
$$
Since  $\alpha$ is arbitrary, the result now follows.
\end{proof}


\section{Index of some  subfields of cyclotomic fields}

We recall that  the index $I(\K)$ of an algebraic 
number field $\K$ is the greatest 
common divisor of indexes $\left[\cO_\K:\Z[\xi]\right]$
taken over all $\xi \in \cO_\K$, where $\cO_\K$ is the ring of
integers of $\K$. 

As in~\cite{Ihara}, we denote by  $I_p$ the index of the field $\K_p$, which 
is the unique cyclic extension of degree $p$ over $\Q$
that is 
contained in the cyclotomic field $\Q(\exp(2 \pi i/p^2))$.

It has been shown in~\cite[Proposition~4~(i)]{Ihara} that 
under the Generalised Riemann Hypothesis the bound
\begin{equation}
\label{eq:I Cond}
\log I_p \le (1+o(1))p^2 \log \log p
\end{equation}
holds as $p\to \infty$. Also~\cite[Proposition~5]{Ihara}
gives an unconditional but weaker bound
$$
\log I_p \le (1/4+o(1))p^2 \log p.
$$

We use Corollary~\ref{cor:Sp2} to obtain an unconditional
improvement of~\eqref{eq:I Cond}.

\begin{theorem}
\label{thm:Ip}
We have
$$
\log I_p \le \(\frac{463}{504}+o(1)\)p^2 \log \log p
$$
as $p\to \infty$. 
\end{theorem}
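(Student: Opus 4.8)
The plan is to combine the improved unconditional bound on the Ihara sum from Corollary~\ref{cor:Sp2} with the relationship between $\log I_p$ and $S_p$ that is already present in~\cite{Ihara}. The key observation is that the derivation of the conditional bound~\eqref{eq:I Cond} in~\cite[Proposition~4~(i)]{Ihara} invokes the Generalised Riemann Hypothesis only through the bound~\eqref{eq:Ihara} on $S_p$; the passage from $S_p$ to $\log I_p$ is itself unconditional. More precisely, the argument of~\cite{Ihara} furnishes an unconditional inequality of the shape
$$
\log I_p \le \(\frac{1}{2} + o(1)\) p^2 S_p,
$$
and it is the insertion of the conditional estimate $S_p \le 2 \log\log p + 2 + o(1)$ that then yields~\eqref{eq:I Cond}.

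Granting this, the theorem reduces to a single substitution. First I would extract from~\cite[Proposition~4~(i)]{Ihara} the displayed unconditional inequality relating $\log I_p$ to $S_p$, checking that no appeal to the Riemann Hypothesis enters before the use of~\eqref{eq:Ihara} and that the proportionality constant is exactly $1/2$. Then I would substitute the unconditional estimate of Corollary~\ref{cor:Sp2}, namely $S_p \le \(463/252 + o(1)\) \log\log p$, in place of~\eqref{eq:Ihara}. Performing the arithmetic $\tfrac{1}{2} \cdot \tfrac{463}{252} = \tfrac{463}{504}$ gives
$$
\log I_p \le \(\frac{1}{2} + o(1)\) p^2 \cdot \frac{463}{252} \log\log p = \(\frac{463}{504} + o(1)\) p^2 \log\log p,
$$
which is the desired bound.

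The only step demanding genuine care is the extraction of the unconditional half of Ihara's relation, and in particular the confirmation that the constant $1/2$ is correct, since an error here would corrupt the final constant $463/504$. No new analytic input is needed beyond this point: all of the difficulty of the improvement over~\eqref{eq:I Cond} is already contained in Corollary~\ref{cor:Sp2}, which in turn rests on the counting estimate of Theorem~\ref{thm:Rp} together with the bound of~\cite{BFKS} on the distribution of $p$th power residues. Thus the present theorem is, in effect, a clean corollary of that work once Ihara's proportionality relation between $\log I_p$ and $S_p$ is read off unconditionally.
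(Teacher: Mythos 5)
Your proposal is correct and follows essentially the same route as the paper: both reduce the theorem to the unconditional inequality $\log I_p \le \frac{p^2}{2} S_p$ and then substitute Corollary~\ref{cor:Sp2}, with the arithmetic $\frac{1}{2}\cdot\frac{463}{252}=\frac{463}{504}$. The only difference is that where you propose to extract that inequality by inspecting the proof of~\cite[Proposition~4~(i)]{Ihara}, the paper derives it explicitly from the exact formula $\log I_p = \sum_{n \in \cQ_p(p)} \alpha_p(n)\Lambda(n)$ of~\cite[Equation~(2.4.1)]{Ihara} via the elementary bound $\alpha_p(n) \le p^2/(2n)$, which confirms your constant $1/2$.
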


\begin{proof} By~\cite[Equation~(2.4.1)]{Ihara} we have
\begin{equation}
\label{eq:I Expl}
\log I_p = \sum_{n \in \cQ_p(p)} \alpha_p(n) \Lambda(n),
\end{equation}
where 
$$
\alpha_p(n) =
\fl{\frac{p}{n}}\(p - \frac{1}{2}n - \frac{1}{2}\fl{\frac{p}{n}}n\).
$$
Since
$$
\alpha_p(n) =\fl{\frac{p}{n}}
\(p - \frac{1}{2}n\(1  +\fl{\frac{p}{n}}\)\) \le \fl{\frac{p}{n}}
\frac{p}{2} \le \frac{p^2}{2n}, 
$$
we see from~\eqref{eq:I Expl} that 
$$
\log I_p \le \frac{p^2}{2} S_p.
$$
Using  Corollary~\ref{cor:Sp2}, we conclude the proof.
\end{proof}

One certainly expects that $I_p$ is much smaller,
than the bound given in Theorem~\ref{thm:Ip},
however no unconditional lower bound  seems to be known
(see~\cite[Proposition~4~(ii)]{Ihara} for a 
conditional estimate).  

 \section*{Acknowledgement}
The author is  very grateful to Yasutaka Ihara, Sergei Konyagin 
and Arne Winterhof for their comments.

During the preparation of this work the author was supported in part by 
the  Australian Research Council  Grant~DP1092835.


\begin{thebibliography}{99}
 

\bibitem{BFKS} J.~Bourgain, K. Ford, S. V. Konyagin and
I. E. Shparlinski,
`On the divisibility of Fermat quotients',
{\it Michigan Math. J.\/}, {\bf 59} (2010), 313--328.

%
 


 

\bibitem{ErnMet} R. Ernvall and T. Mets{\"a}nkyl{\"a},
`On the $p$-divisibility of Fermat quotients',
{\it Math. Comp.\/}, {\bf  66} (1997),  1353--1365.

 

\bibitem{Fouch} W. L. Fouch{\'e},
`On the Kummer-Mirimanoff congruences',
{\it Quart. J. Math. Oxford\/}, {\bf 37} (1986),  257--261.

 

%

\bibitem{Gran0} A. Granville,
{\it Diophantine equations with varying exponents\/}, PhD 
Thesis, QueenÕs University, Kingston, Ontario, Canada, 1987.

\bibitem{Gran1} A. Granville,
`Some conjectures related to Fermat's Last Theorem',
{\it Number Theory\/}, 
 Walter de Gruyter, NY, 1990, 177--192.

\bibitem{Gran2} A. Granville,
`On pairs of coprime integers with no large prime factors',
{\it Expos. Math.\/}, {\bf 9} (1991), 335--350.
 
%

\bibitem{Ihara} Y. Ihara, `On the Euler-Kronecker constants of
global fields and primes with small norms',
{\it Algebraic Geometry and Number Theory\/}, Progress in Math.,
Vol. 850, Birkh{\"a}user, Boston, Cambridge, MA, 2006, 407--451.
%
 \bibitem{IwKow} H. Iwaniec and E. Kowalski,
{\it Analytic number theory\/}, Amer.  Math.  Soc.,
Providence, RI, 2004.


\bibitem{Len} H. W. Lenstra, `Miller's primality test',
{\it  Inform. Process. Lett.\/}, {\bf 8} (1979), 86--88.
 
\bibitem{OstShp}
A. Ostafe and I.~E.~Shparlinski,  
`Pseudorandomness and dynamics of Fermat quotients',
{\it SIAM J. Discr. Math.\/},   {\bf 25} (2011),  50--71.

 
%
%
%


\bibitem{Usol} L.~P.~Usol'tsev,
`On an estimate for a multiplicative function',
{\it   Additive problems in number
theory\/},  Kuybyshev. Gos. Ped. Inst.,
         Kuybyshev, 1985,  34--37 (in Russian).
 



 



\end{thebibliography}
\end{document}